
\documentclass[final]{siamltex}


\usepackage{amsmath,amssymb,epsfig}



\title{Perturbation of matrices and non-negative rank with a view toward statistical models}


\author{Cristiano Bocci\thanks{Dipartimento di Scienze Matematiche e Informatiche ``R. Magari'', Universit\`a di Siena, Pian dei Mantellini 44, 53100 Siena, Italy ({\tt cristiano.bocci@unisi.it})}
        \and Enrico Carlini\thanks{Dipartimento di Matematica, Politecnico di Torino,
Corso Duca degli Abruzzi, 24, 10129 Turin, Italy ({\tt
enrico.carlini@polito.it}).} \and Fabio
Rapallo\thanks{Dipartimento di Scienze e Tecnologie Avanzate,
Universit\`a del Piemonte Orientale, Viale Teresa Michel 11, 15121
Alessandria, Italy ({\tt fabio.rapallo@mfn.unipmn.it})}}

\begin{document}

\maketitle

\begin{abstract}
In this paper we study how perturbing a matrix changes its
non-negative rank. We prove that the non-negative rank is
upper-semicontinuous and we describe some special families of
perturbations. We show how our results relate to Statistics in
terms of the study of Maximum Likelihood Estimation for mixture
models.
\end{abstract}

\begin{keywords}
Frobenius norm, independence of random variables, Jacobian matrix,
mixture models.
\end{keywords}

\begin{AMS}
15B51, 62H17
\end{AMS}

\pagestyle{myheadings} \thispagestyle{plain} \markboth{C. BOCCI,
E. CARLINI AND F. RAPALLO}{PERTURBATION OF MATRICES AND
NON-NEGATIVE RANK}

\section{Introduction}

The rank of a matrix gives the least number of rank one matrices,
also known as dyadic products, needed to write the matrix as a sum of dyads.
More precisely an $n\times m$ matrix $P$ such that
$\mathrm{rk}(P)=k$ can be written as
\begin{equation} \label{dyadicexp}
P=c_1 (r_1)^t+\ldots +c_k (r_k)^t \, ,
\end{equation}
where the column vectors $c_h$ and $r_h$ have the proper sizes.
Even if $P$ has non-negative entries, the vectors $c_h$ and $r_h$,
are allowed to have negative entries. If we require the vectors to
have non-negative entries, then the least number of summands is
called the non-negative rank of $P$, namely $\mathrm{rk}_+(P)$.
The non-negativity constraints make the situation more complex,
and the non-negative rank of a matrix is harder to study than the
ordinary rank, see e.g. \cite{cohen|rothblum:93}. From this
description it is clear that $\mathrm{rk}_+(P)\geq\mathrm{rk}(P)$.
Therefore, it could be impossible to decompose a rank $k$ matrix
into the sum of exactly $k$ dyadic products $c_h(r_h)^t$, where
$c_h$ and $r_h$ are non-negative vectors. The relations between
the ordinary rank and the non-negative rank have received an
increasing attention in the last years, both from a theoretical
and an applied point of view. Some recent references are
\cite{beasley|laffey:09}, \cite{dong|lin|chu:08},
\cite{lin|chu:10}, \cite{pauca|piper|plemmons:06} and
\cite{carlini|rapallo:10c}.

Computing the non-negative rank of a matrix $P$ is related to
compute a non-negative factorization of $P$. There are many
recently proposed algorithms to deal with the problem of
non-negative matrix factorization, e.g. see \cite{lee|seung:01} or
\cite{ho|vandooren:08} for an application to stochastic matrices.
However, the non-negative factorization problem is known to be
NP-hard (\cite{vavasis:09}). Roughly speaking, we can say that
there is no efficient way to compute the non-negative rank.

In this paper we study how the non-negative rank of a matrix is
affected by small perturbations of the matrix. This is of
particular interest when the matrix arises in Probability and
Statistics. In fact, when the data entries of the matrices are
determined by experimental data, small perturbations must be taken
into account.

Here, a perturbation is intended in the following topological
sense. Given a matrix $P$ we consider a neighborhood of $P$ in the
topology induced by the Frobenius norm on matrices. We call any
matrix in the neighborhood a perturbation of $P$. Clearly this
notion is more meaningful and interesting when a small
neighborhood is considered and hence matrices close to $P$ are
studied.

We show that the non-negative rank is upper-semicontinuous with
respect to the Frobenius norm, see Theorem
\ref{uppersemicontinuos}, and hence it cannot be decreased by
small perturbations of the matrix. We also produce examples of
perturbations preserving the non-negative rank, see Proposition
\ref{barycenter}. Using a Jacobian analytic approach we show that,
under some mild conditions, perturbing a matrix leaving
the ordinary rank fixed also leaves the non-negative rank
unchanged, see Proposition \ref{isorankprop}.

The notion of non-negative rank has also relevant applications in
Probability and Statistics. In fact, a probability matrix with
dyadic expansion as in Equation \eqref{dyadicexp} belongs to the
mixture of $k$ independence models for categorical data (in the
case that all the involved vectors are non-negative). Mixture
models play a central role in applied probability, as they are the
key tool in modelling partially observed phenomena, see
\cite{agresti:02} for more details. Three major topics in
Probability and Statistics where mixture models are used as a key
ingredient are: $(a)$ the study of sequence alignment, with
special attention to DNA sequences and phylogenetic trees, see
e.g. \cite{pachter|sturmfels:05,allman|rhodes:07}, and the book
\cite{shonkwiler:09} for a detailed construction of the underlying
mathematical models; $(b)$ the cluster analysis for categorical
multidimensional data, see e.g. \cite{govaert|nadif:10}; $(c)$
multivariate methods for text mining, see e.g.
\cite{zhai|velivelli|yu:04}.

There are many unsolved problems concerning mixture models. Among
these, one of the most important is the determination of the
maximum likelihood estimators. Despite the fact that Maximum
Likelihood Estimation (MLE) is a largely investigated topic, and
many numerical solutions are available, a complete theoretical
solution is not available yet. Therefore, any advance in the
geometric description of such models can be useful to address the
maximization problem from the theoretical viewpoint.

Recently, mixture models for categorical data have been considered
also in the framework of Algebraic Statistics, a branch of
Statistics which uses notions and techniques from Computational
Algebra and Algebraic Geometry, see
\cite{pachter|sturmfels:05,drton|sturmfels|sullivant:09,gibiliscoetal:10}.
In this paper, we show how the geometric description of the set of
matrices with fixed non-negative rank leads to a better
understanding of MLE for mixture models.

The paper is structured as follows. In Section \ref{basicsection}
we recall some basic notions. In Section \ref{uppersection} and
Section \ref{jacobiansection} we use a topological and analytic
approach to study perturbations. In Section \ref{examples} we use
our results to work out some significant examples. Finally, in
Section \ref{statsection} we show how our results relate to the
study of MLE in Statistics.

\section{Basic facts}\label{basicsection}

In this section, we recall some known facts about the non-negative
rank. The definitions and the results presented below will be used
throughout the paper.

\noindent{\bf Non-negative matrices.} A non-negative $n\times m$
matrix is a point in $\mathbb{R}_{\geq 0}^{nm}$ where

\[\mathbb{R}_{\geq 0}^{nm}=\left\{(p_{i,j}) : p_{i,j}\in\mathbb{R}, p_{i,j}\geq 0\right\}.\]

\noindent{\bf Stochastic matrices.} A stochastic matrix is a
non-negative matrix having column sums equal to one. To each
non-negative matrix without zero columns, we can associate a
stochastic matrix. Denote by $P=[c_1, \dots, c_m]$ the set of
columns of a non-negative matrix $P$, where $c_j\neq 0$ for all
$j$. Define the scaling factor $\sigma(P)$ by
\[
\sigma(P):=\mbox{diag} \{||c_1||_1, \dots, ||c_m||_1 \}
\]
where $||\cdot ||_1$ is the $1$-norm in $\mathbb{R}^n$. Then the pullback map $\theta$ defined by
\[
\theta (A) = A \sigma (A)^{-1}
\]
produces stochastic matrices.

{\em Remark.} In Probability, stochastic matrices are defined as the
non-negative matrices having row sums equal to one. Here we adopt
the convention of normalizing the columns. The rank and the
non-negative rank are clearly invariant under matrix transposition. Thus this
convention does not affect our results.

\noindent{\bf Simplex.} The $n$-simplex in $\mathbb{R}^{n}$ is
\[ \Delta^n=\left\{ (x_1,\ldots,x_n) \in \mathbb{R}^{n} \ :\  x_i \geq 0, \sum_{i=1}^n x_i \leq 1 \right\} \, .\]
Note that an $n\times m$ stochastic matrix $P$ can be seen as a
collection of $m$ points in $\Delta^n$. More precisely we consider
the map $\pi_n$ assigning to a matrix the set of its columns, that
is $\pi_n(P)=\{c_1,\ldots,c_m\}\subset \Delta^{n}$. All the points
$c_j$ lie on the same face of the $n$-simplex, which is a
$(n-1)$-simplex. Hence, by dropping the last component of each
$c_j$, we have a map $\pi_{n-1}$ sending $P$ into a collection of
$m$ points in $\Delta^{n-1}$. Following \cite{lin|chu:10} we will
use this geometric interpretation to visualize small size matrices
and their ranks, see Section \ref{examples}.

\noindent{\bf Non-negative rank.} Given a  $n\times m$
non-negative matrix $P$, the non-negative rank of $P$ is the
smallest integer $k$ such that
\[P=c_1 (r_1)^t+\ldots +c_k (r_k)^t\]
where the vectors $c_h\in\mathbb{R}^n$ and the vectors
$r_h\in\mathbb{R}^m$ have non-negative entries. The non-negative
rank of the matrix $P$ is denoted with $\mathrm{rk}_+(P)$. For
different description of the non-negative rank we refer the reader
to \cite{cohen|rothblum:93}.

{\em Remark.} For a non-negative matrix $P$ without zero columns,
one has that $\mathrm{rk}_+(P)=\mathrm{rk}_+(\theta(P))$. Hence,
the study of the non-negative rank of stochastic matrices
coincides with the study of the non-negative rank of non-negative
matrices without zero columns (see \cite{lin|chu:10}). Thus, from
now on, we will often restrict our attention to stochastic
matrices.

\noindent{\bf Combinatorics, Geometry and non-negative rank.}
The Nested Polytopes Problem (NPP) is introduced in
\cite{gillis|glineur:10} inspired by the intermediate simplex
problem of \cite{vavasis:09}. We can state a simplified version of
NPP directly related to the study of the non-negative rank. Let
$\mathcal{P}$ be a polytope. Given a set of $r$  points $Z$ in
$\mathcal{P}$ is it possible to find $k$ points ($k<r$) in
$\mathcal{P}$ having convex hull $\mathcal{P}_k$ such that
\[Z\subset\mathcal{P}_k\subset\mathcal{P}?\]

The following result is contained in \cite{hazewinkel:84} and it
will be of crucial importance in this paper. Thus we provide a
proof here for the convenience of the reader.

\begin{lemma}\label{nestedpolygons}
Let $P$ be an $n\times m$ stochastic matrix. If we set
$Z=\pi_{n-1}(P)$, then

\[\mbox{rk}_+(P)=\min_t\left\{ t : Z\subset\mathcal{P}_t\subset\Delta^{n-1} \right\}\]

where $\mathcal{P}_t$ is the convex hull of $t$ points.

\end{lemma}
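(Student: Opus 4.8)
My plan is to establish the claimed equality by proving the two inequalities separately, in each case translating between a non-negative factorization of $P$ and a configuration of points sandwiched between $Z$ and $\Delta^{n-1}$. The bookkeeping device I would fix first is the affine bijection
\[
\rho \colon \Bigl\{\, x \in \mathbb{R}^n_{\geq 0} : \textstyle\sum_{i=1}^n x_i = 1 \,\Bigr\} \longrightarrow \Delta^{n-1}
\]
that forgets the last coordinate; its inverse sends $y\in\Delta^{n-1}$ to the vector obtained by appending $1-\sum_{i=1}^{n-1}y_i\ge 0$, and applied columnwise $\rho$ realizes the map $\pi_{n-1}$ on stochastic matrices. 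The elementary facts I would use repeatedly are that $\rho$ and $\rho^{-1}$ carry convex combinations to convex combinations, and that a non-negative vector with prescribed coordinate sum $1$ is completely determined by its image under $\rho$.

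For the inequality $\mathrm{rk}_+(P)\ge \min_t\{t : Z\subset\mathcal{P}_t\subset\Delta^{n-1}\}$, I would take a minimal non-negative factorization $P=\sum_{h=1}^k c_h(r_h)^t$ with $k=\mathrm{rk}_+(P)$, discard any zero $c_h$, and rescale $c_h\mapsto c_h/\|c_h\|_1$, $r_h\mapsto \|c_h\|_1 r_h$ so that every $c_h$ has coordinate sum $1$ (this changes neither $P$ nor the number of summands). Reading off the $j$-th column gives $P_j=\sum_h (r_h)_j\,c_h$; summing coordinates and using that $P$ is stochastic forces $\sum_h (r_h)_j=1$, so each column of $P$ is a convex combination of $c_1,\dots,c_k$. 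Applying $\rho$, the $m$ points of $Z=\pi_{n-1}(P)$ all lie in the convex hull $\mathcal{P}_k$ of the $k$ points $\rho(c_h)\in\Delta^{n-1}$, and $\mathcal{P}_k\subseteq\Delta^{n-1}$ by convexity of the simplex; hence $k$ is an admissible value of $t$.

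For the reverse inequality I would start from a configuration $Z\subseteq\mathcal{P}_t\subseteq\Delta^{n-1}$ with $\mathcal{P}_t=\mathrm{conv}(v_1,\dots,v_t)$ and $v_i\in\Delta^{n-1}$, and set $\tilde v_i=\rho^{-1}(v_i)\in\mathbb{R}^n_{\ge 0}$. Since $\pi_{n-1}(P_j)\in\mathcal{P}_t$, I can pick $\lambda_{ij}\ge 0$ with $\sum_i\lambda_{ij}=1$ and $\pi_{n-1}(P_j)=\sum_i\lambda_{ij}v_i$; then $P_j=\sum_i\lambda_{ij}\tilde v_i$, because the first $n-1$ coordinates agree by construction and both sides have coordinate sum $1$ ($P$ stochastic on the left; $\sum_i\lambda_{ij}=1$ with each $\tilde v_i$ of coordinate sum $1$ on the right). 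Collecting the coefficients into $r_i=(\lambda_{i1},\dots,\lambda_{im})^t\ge 0$ yields $P=\sum_{i=1}^t \tilde v_i(r_i)^t$, so $\mathrm{rk}_+(P)\le t$; minimizing over all such $t$ and combining with the previous paragraph gives the equality.

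There is no deep obstacle here: the proof is essentially a careful dictionary between non-negative rank and nested polytopes. The two points that genuinely need care — and which I would spell out — are that the rescaling in the first part does not introduce extra summands (zero columns $c_h$ are simply dropped, which can only decrease the count), and that the last coordinate "lost" under the projection $\pi_{n-1}$ in the second part is automatically recovered, both being consequences of the stochasticity of $P$ together with the fact that the points of $\Delta^{n-1}$ are exactly the valid truncations of probability vectors on $n$ states.
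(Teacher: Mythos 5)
Your proof is correct and follows essentially the same route as the paper's: rescale a minimal non-negative factorization so the column factors are probability vectors, use stochasticity of $P$ to force the row-factor coefficients to be convex weights, and pass back and forth through the affine identification of $\Delta^{n-1}$ with the probability simplex in $\mathbb{R}^n$ (your $\rho$). The only cosmetic differences are that you name the bijection $\rho$ explicitly and remark on discarding zero columns $c_h$ (a case which in fact cannot occur in a minimal factorization), whereas the paper leaves these points implicit.
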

\begin{proof}
If $\mbox{rk}_+(P)=k$, then $P=c_1 (r_1)^t+\ldots +c_k (r_k)^t$,
where the vectors $c_h$ and $r_h$ have non-negative entries. As
$P$ is a stochastic matrix, we can consider the dyads
$\frac{c_h}{||c_h||_1} (||c_h||_1 r_h)^t$. Hence, without loss of
generality, we can assume that the following hold:
\begin{itemize}

\item $||c_h||_1=1$, for all $1\leq h\leq k$;

\item $(r_h)^t=(r_h^{(1)},\ldots,r_h^{(m)})$ and $\sum_h
r_h^{(j)}=1$ for all $1\leq j\leq m$.
\end{itemize}

Let $(c_h)^t=(c_h^{(1)},\ldots,c_h^{(n)})$ and consider the points
\[Q_h=(c_h^{(1)},\ldots,c_h^{(n-1)})\in\Delta^{n-1}.\]
It is now straightforward to check that $\pi_{n-1}(P)$ is
contained in the convex hull of the points $Q_1,\ldots,Q_k$.
Hence, $\mbox{rk}_+(P)=k\geq \min_t\left\{ t :
Z\subset\mathcal{P}_t\subset\Delta^{n-1} \right\}$.

Conversely, if $k=\min_t\left\{ t :
Z\subset\mathcal{P}_t\subset\Delta^{n-1} \right\}$ then
$\pi_{n-1}(P)=\{c_1,\ldots,c_m\}$ is in the convex hull of points
$Q_1,\ldots,Q_k\in\Delta^{n-1}$. Namely
\[c_j=\sum_{h=1}^k \alpha_h^{(j)}Q_h\]
and $\sum_h\alpha_h^{(j)}=1$. If we let
$Q_h=(q_h^{(1)},\ldots,q_h^{(n-1)})$ then
\[P=\sum_{h=1}^k
\left(\begin{array}{c} q_h^{(1)} \\ \vdots \\ q_h^{(n-1)} \\
1-\sum_{i=1}^{n-1} q_h^{(i)}\end{array}\right)
\left(\begin{array}{ccc} \alpha_h^{(1)} & \ldots & \alpha_h^{(m)}
\end{array}\right).\]
Hence $\min_t\left\{ t : Z\subset\mathcal{P}_t\subset\Delta^{n-1}
\right\}=k\geq\mbox{rk}_+(P)$ and the result follows.
\end{proof}

{\em Remark.} One can also consider the NPP for
$Z\subset\mathcal{P}_k\subset\Delta^{n-1}\cap H$ where $H$ is the
linear span of $Z$. This amounts to study the restricted
non-negative rank as described in \cite{hazewinkel:84} and
\cite{gillis|glineur:10}.

\section{Upper-semicontinuity of non-negative rank}\label{uppersection}

In this section we will use the ideas recalled in Section
\ref{basicsection} to show that the non-negative rank is
upper-semicontinu\-ous in the topology given by the Frobenius
norm.

Given a non-negative matrix $P=(p_{i,j}) \in\mathbb{R}^{nm}_{\geq 0}$ and
$\epsilon>0$ define the ball of center $P$ and radius $\epsilon$

\[B(P,\epsilon)=\left\{N=(n_{i,j})\in\mathbb{R}^{nm}_{\geq 0} : \sqrt{\sum (p_{i,j}-n_{i,j})^2}<\epsilon\right\}.\]

\begin{theorem}\label{uppersemicontinuos}
Let $P$ be an $n \times m$ non-negative matrix, without zero
columns, such that $\mathrm{rk}_+(P)=k$, then there exists a ball
$B(P,\epsilon)$ such that $\mbox{rk}_+(N)\geq k$ , for all $N\in
B(P,\epsilon)$.
\end{theorem}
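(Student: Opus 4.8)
The plan is to prove the contrapositive-flavored statement directly: suppose, for contradiction, that no such ball exists. Then there is a sequence of non-negative matrices $N^{(\nu)} \to P$ in the Frobenius norm with $\mathrm{rk}_+(N^{(\nu)}) \leq k-1$ for all $\nu$. After rescaling each $N^{(\nu)}$ by its scaling factor $\sigma$ (which is legitimate for all large $\nu$, since $P$ has no zero columns, so neither does $N^{(\nu)}$ eventually, and $\sigma(N^{(\nu)}) \to \sigma(P)$), we may assume every $N^{(\nu)}$ is stochastic, that $N^{(\nu)} \to \theta(P)$, and that $\mathrm{rk}_+(N^{(\nu)}) \leq k-1$. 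Now invoke Lemma \ref{nestedpolygons}: for each $\nu$ there are at most $k-1$ points $Q_1^{(\nu)}, \dots, Q_{k-1}^{(\nu)} \in \Delta^{n-1}$ whose convex hull $\mathcal{P}^{(\nu)}$ contains $\pi_{n-1}(N^{(\nu)})$ and is contained in $\Delta^{n-1}$.

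The core of the argument is a compactness step. The simplex $\Delta^{n-1}$ is compact, so the sequence of $(k-1)$-tuples $(Q_1^{(\nu)}, \dots, Q_{k-1}^{(\nu)})$ lives in the compact set $(\Delta^{n-1})^{k-1}$; passing to a subsequence, I may assume $Q_h^{(\nu)} \to Q_h \in \Delta^{n-1}$ for each $h = 1, \dots, k-1$. Let $\mathcal{P}$ be the convex hull of $Q_1, \dots, Q_{k-1}$; clearly $\mathcal{P} \subseteq \Delta^{n-1}$. I then want to conclude that $\pi_{n-1}(\theta(P)) \subseteq \mathcal{P}$, which by Lemma \ref{nestedpolygons} would force $\mathrm{rk}_+(P) = \mathrm{rk}_+(\theta(P)) \leq k-1$, contradicting $\mathrm{rk}_+(P) = k$.

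The step I expect to be the main obstacle is precisely this last containment: I need that the columns of $\theta(P)$, being limits of the columns of $N^{(\nu)}$ (each of which lies in $\mathcal{P}^{(\nu)} = \mathrm{conv}(Q_1^{(\nu)}, \dots, Q_{k-1}^{(\nu)})$), lie in $\mathrm{conv}(Q_1, \dots, Q_{k-1})$. Write each column $c_j^{(\nu)}$ of $\pi_{n-1}(N^{(\nu)})$ as $c_j^{(\nu)} = \sum_{h=1}^{k-1} \alpha_{h,j}^{(\nu)} Q_h^{(\nu)}$ with $\alpha_{h,j}^{(\nu)} \geq 0$ and $\sum_h \alpha_{h,j}^{(\nu)} = 1$. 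The coefficient vectors $(\alpha_{1,j}^{(\nu)}, \dots, \alpha_{k-1,j}^{(\nu)})$ lie in the compact standard simplex, so after a further subsequence they converge to some $(\alpha_{1,j}, \dots, \alpha_{k-1,j})$ with $\alpha_{h,j} \geq 0$ and $\sum_h \alpha_{h,j} = 1$. Taking the limit in $c_j^{(\nu)} = \sum_h \alpha_{h,j}^{(\nu)} Q_h^{(\nu)}$ gives $c_j = \sum_h \alpha_{h,j} Q_h \in \mathcal{P}$, where $c_j$ is the $j$-th column of $\pi_{n-1}(\theta(P))$. Doing this for each of the finitely many columns $j = 1, \dots, m$ (extracting a subsequence each time) yields $\pi_{n-1}(\theta(P)) \subseteq \mathcal{P} \subseteq \Delta^{n-1}$, completing the contradiction. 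The only mild subtlety to be careful about is the bookkeeping of successive subsequence extractions, which is harmless since there are only finitely many indices ($k-1$ points and $m$ columns) to handle.
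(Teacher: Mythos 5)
Your proof is correct and follows essentially the same route as the paper's: assume a sequence converging to $P$ with non-negative rank at most $k-1$, use Lemma \ref{nestedpolygons} to get nested polytopes with at most $k-1$ vertices, extract a convergent subsequence of the vertex tuples in the compact set $(\Delta^{n-1})^{k-1}$, and pass to the limit in the convex-combination coefficients to place $\pi_{n-1}(\theta(P))$ inside the limit polytope, contradicting Lemma \ref{nestedpolygons}. You are in fact slightly more careful than the paper at two points: you explicitly justify the rescaling to stochastic matrices (the perturbed matrices eventually have no zero columns, so $\theta$ is defined and continuous along the sequence), and you work with a uniform count of $k-1$ vertices rather than tacitly fixing a single value $t<k$ along the whole sequence.
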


\begin{proof} We give a proof by contradiction. Suppose that for
all natural numbers $r$ there exists  $N(r)\in B(P,{\frac 1 r})$
such that $\mbox{rk}_+(N(r))=t<k$. Clearly, the limit of the
sequence $N(r)$ is $P$. By hypothesis we know that there exist
convex polytopes $\mathcal{P}(r)\subset\Delta^{n-1}$ such that
\[\pi_{n-1}\circ\theta(N(r))\subset\mathcal{P}(r),\]
where each $\mathcal{P}(r)$ is the convex hull of the points
\[q_1(r),\ldots, q_t(r)\in\Delta^{n-1}.\]
We now {\it claim} that there exists a limit polytope $\bar
{\mathcal{P}}\subset\Delta^{n-1}$ which is the convex hull of
points $\bar q_1,\ldots,\bar q_t$ (possibly not distinct) obtained
by the sequences $q_h(r)$. As $t<k$ it is enough to show that
$\pi_{n-1}\circ\theta(P)\subset\bar {\mathcal{P}}$ to get a
contradiction using Lemma \ref{nestedpolygons}.

Let
\[\pi_{n-1}\circ\theta(N(r))=\left\{c_1(r),\ldots,c_m(r)\right\}\]
and \[\pi_{n-1}\circ\theta(P)=\left\{c_1,\ldots,c_m\right\}\] and
notice that (possibly after reordering) the limit of $c_j(r)$ is $c_j$. Also notice that for
each $j$ we have
\[c_j(r)=\alpha_{j,1}(r)q_1(r)+\ldots+\alpha_{j,t}(r)q_t(r)\]
where the coefficients $\alpha_{j,h}(r)$ vary in the compact set
$[0,1]$, i.e. $c_j$ belongs to the convex hull of the points
$q_1(r),\ldots,q_t(r)$. Taking subsequences and passing to the
limit (limits exist as our sequences have values in compact sets)
for each $j$ we get
\[c_j=\bar \alpha_{j,1}\bar q_1+\ldots+\bar \alpha_{j,t}\bar q_t\]
and hence $c_j\in\bar{\mathcal{P}}$, for $j=1,\ldots,m$. Thus a
contradiction and the statement is proved.

{\it Proof of the claim} The sequences $q_h(r)$ have values in the
compact set $\Delta^{n-1}$ and hence they each have converging
subsequences. To show that a limit polytope exists, we proceed as
follows. Take a subsequence of $q_1(r)$ and let $\bar q_1$ be its
limit. Then, either $q_2(r)$ has a subsequence with limit $\bar
q_2\neq\bar q_1$ or it does not and, in this case, we set $\bar
q_2=\bar q_1$. In the latter case the limit polytope will be the
convex hull of strictly less than $t$ distinct points. Iterating
the process we obtain points $\bar q_1,\ldots,\bar q_t$ and their
convex hull is the limit polytope $\bar{\mathcal{P}}$.
\end{proof}

Thus, given a matrix $P$, we know that in a suitable neighborhood
of $P$ the non-negative rank can only increase, i.e. the
non-negative rank is upper-semiconti\-nuous.

Clearly each neighborhood of a matrix $P$ contains matrices having
the same non-negative rank of $P$. Consider, for example, the
matrices $\lambda P$ for $\lambda\in\mathbb{R}$ close to one. But
even more is true as shown in the following statement.

\begin{proposition}[Barycentric perturbation]\label{barycenter} Let $P$ be a non-negative
$n\times m$ matrix, without zero columns, such that
$\mathrm{rk}(P)>1$. For any $\epsilon>0$ there exists $N_\epsilon
\in B(P,\epsilon)$ such that
\[\mathrm{rk}_+(N_\epsilon)=\mathrm{rk}_+(P)\]
and $N_\epsilon\neq \lambda P$ for any $\lambda\in\mathbb{R}$.
\end{proposition}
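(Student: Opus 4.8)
The plan is to perturb $P$ along a direction that keeps the columns inside the same nested polytope used to witness $\mathrm{rk}_+(P)$, so that Lemma~\ref{nestedpolygons} (together with Theorem~\ref{uppersemicontinuos}) pins the non-negative rank down exactly, while arranging that the perturbation is not a scalar multiple of $P$. Concretely, pass to the stochastic normalization and work with the point configuration $Z=\pi_{n-1}\circ\theta(P)=\{c_1,\dots,c_m\}\subset\Delta^{n-1}$. Let $k=\mathrm{rk}_+(P)$ and fix points $Q_1,\dots,Q_k$ with $Z\subset\mathcal{P}_k=\mathrm{conv}(Q_1,\dots,Q_k)\subset\Delta^{n-1}$. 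Since $\mathrm{rk}(P)>1$, the configuration $Z$ is not a single point, and moreover $\mathcal{P}_k$ has nonempty relative interior in the affine span $H$ of $Z$ (its dimension is at least $\mathrm{rk}(P)-1\ge 1$); pick an interior point $b$ of $\mathcal{P}_k$ relative to $H$ — for instance the barycenter $\frac1k\sum Q_h$ — whence a small closed ball around $b$ inside $H$ still lies in $\mathcal{P}_k$.

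Next I would build $N_\epsilon$ by moving each column of the stochastic matrix a tiny bit toward $b$: set $\tilde c_j=(1-s)c_j+s\,b$ for a small parameter $s>0$, and let $\tilde P$ be the stochastic matrix with these columns (completing the last coordinate as in the proof of Lemma~\ref{nestedpolygons}), then define $N_\epsilon=\tilde P\,\sigma(P)$ so as to land near $P$ rather than near $\theta(P)$. For $s$ small enough, $N_\epsilon\in B(P,\epsilon)$ by continuity of all these operations. Each $\tilde c_j$ still lies in $\mathcal{P}_k$ (a convex set containing both $c_j$ and $b$), so by Lemma~\ref{nestedpolygons} $\mathrm{rk}_+(N_\epsilon)\le k$; combined with upper-semicontinuity (Theorem~\ref{uppersemicontinuos}), shrinking $s$ further if necessary forces $\mathrm{rk}_+(N_\epsilon)\ge k$, hence equality.

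It remains to guarantee $N_\epsilon\neq\lambda P$ for every $\lambda\in\mathbb{R}$. Equivalently, after column normalization, $\tilde P\neq\theta(P)$, i.e. the map $c_j\mapsto(1-s)c_j+s\,b$ is not the identity on $Z$; this fails only if every $c_j$ already equals $b$, which would make $Z$ a single point and contradict $\mathrm{rk}(P)>1$. Thus as soon as $s>0$ the perturbed configuration genuinely differs from the original, so $N_\epsilon$ is not a scalar multiple of $P$ (any such multiple normalizes to $\theta(P)$). The main obstacle I anticipate is purely bookkeeping: verifying that $b$ can be chosen in the relative interior of $\mathcal{P}_k$ so that the contracted points stay inside — this is where the hypothesis $\mathrm{rk}(P)>1$ is genuinely used, both to ensure $\dim\mathcal{P}_k\ge 1$ and to ensure the configuration is not already concentrated at $b$ — and checking that the normalization factors $\sigma$ behave continuously so that closeness in $\Delta^{n-1}$ transfers back to closeness in $\mathbb{R}^{nm}_{\ge 0}$.
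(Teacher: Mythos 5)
Your proof is correct and follows essentially the same strategy as the paper: contract the columns toward a barycenter so they remain inside a nested polytope witnessing $\mathrm{rk}_+(P)$, then combine Lemma~\ref{nestedpolygons} with Theorem~\ref{uppersemicontinuos}, and use $\mathrm{rk}(P)>1$ to rule out proportionality. The paper simply takes $b=\frac{1}{m}\sum_j c_j$, the barycenter of $P$'s own (unnormalized) columns, and contracts directly without passing through $\theta(P)$ or the vertices $Q_h$, which avoids your extra bookkeeping about relative interiors (which, incidentally, is not actually needed --- membership $b\in\mathcal{P}_k$ suffices) but is otherwise the same argument.
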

\begin{proof} Clearly, it is enough to prove the result for small $\epsilon$. Thus, we can assume that each matrix in
$B(P,\epsilon)$ has non-negative rank at least $\mathrm{rk}_+(P)$,
i.e. $\epsilon$ is small enough for Theorem
\ref{uppersemicontinuos} to apply.

Let $P$ have columns $c_j$ and consider the vector $b={\frac 1
m}\sum_{j=1}^m c_j$. Roughly speaking we consider the barycenter of
the points $\pi_{n-1}\circ\theta(P)$. Then we consider the
$n\times m$ matrix $N_\delta$ having the $j$-th column defined as
\[c_j+{\delta}(b-c_j),\]
for $\delta\in [0,1]$. When $\delta$ moves from zero to one, the
points $\pi_{n-1}\circ\theta(N_\delta)$ approach the barycenter
$b$. Thus, by Lemma \ref{nestedpolygons}, we have that
$\mathrm{rk}_+(N_\delta)\leq\mathrm{rk}_+(P)$ for $\delta\in
[0,1]$.

By choosing $\delta$ small enough we get $N_\delta\in
B(P,\epsilon)$. Hence we have that
$\mathrm{rk}_+(N_\delta)=\mathrm{rk}_+(P)$. Letting
$N_\epsilon=N_\delta$ the existence part of the proof is done.

To complete the proof, we only have to show that $N_\epsilon$ and
$P$ are not proportional. If $N_\epsilon=\lambda P$ for some
$\lambda\in\mathbb{R}$, then, by the construction of $N_\epsilon$,
either all the columns $c_j$ are proportional or $b=0$. As
$\mathrm{rk}(P)>1$ the matrix cannot have proportional columns.
Moreover, $P$ is non-negative, thus $b$ cannot be the zero vector.
Hence, if $N_\epsilon=\lambda P$, we get a contradiction and this
completes the proof.
\end{proof}

\section{Jacobian approach}\label{jacobiansection}

Throughout this section we assume $k \leq \min \{ n,m \}$ and we
let $X_{n \times m,k}\subset \mathbb{R}^{mn}$ be the variety of
$n\times m$ matrices of rank at most $k$. It is well-known that
$\mathrm{dim}(X_{n \times m,k})=k(n+m-k)$, e.g. see
\cite[Proposition 12.2]{harris:92}.

Consider the map $f:\mathbb{R}^{k(n+m)} \to X_{n \times
m,k}\subseteq\mathbb{R}^{mn}$ which sends the point
\[
p=\left(x_{1,1}, \dots ,x_{1,n}, y_{1,1}, \dots ,y_{1,m}, \dots ,
x_{k,1}, \dots ,x_{k,n}, y_{k,1}, \dots ,y_{k,m}  \right)
\]
to the matrix
\begin{equation}\label{feqn}
f(p)= \sum_{h=1}^k
\begin{pmatrix}
x_{h,1}\\
\vdots \\
x_{h,n}\\
\end{pmatrix}
\begin{pmatrix}
y_{h,1} & \dots & y_{h,m}
\end{pmatrix}.
\end{equation}

Let $f_+$ be the restriction of $f$ to the non-negative orthant
$\mathbb{R}_{\geq 0}^{k(n+m)}$. The image of $f_+$ is the set $X^+_{n
\times m,k}$ of $n\times m$ matrices of non-negative rank at most
$k$. It is clear that $X^+_{n\times m,k}\subseteq X_{n \times m,k}$.

{\em Remark.} We let $f^*_+(p)$ be the Jacobian matrix of $f_+$ at
$p$ and we say that $f^*_+(p)$ has maximal rank if its rank is
$k(n+m-k)$.  Thus, if $f^*_+(p)$ has maximal rank, then the map
$f_+$ is locally surjective at $p$, e.g. see \cite[page 25 Corollary (d)]{warner:71}.

We can use this Jacobian approach to investigate properties of the
non-negative rank under perturbations preserving the rank.

\begin{proposition}[Isorank perturbation]\label{isorankprop}
Let $P$ be an $n\times m$ non-negative matrix such that
$\mathrm{rk}_+(P)=k$ and consider the map $f$ as defined in
\eqref{feqn}. If $P=f_+(p)$ is such that $f^*_+(p)$ has maximal
rank and $p$ has positive coordinates, then there exists a ball
$B(P,\epsilon)$ such that for each $N\in B(P,\epsilon)$ we have:
\[\mbox{if }\mathrm{rk}(N)=\mathrm{rk}(P),\mbox{then }\mathrm{rk}_+(N)=\mathrm{rk}_+(P).\]
\end{proposition}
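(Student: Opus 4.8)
The plan is to sandwich $\mathrm{rk}_+(N)$ for $N$ near $P$: upper-semicontinuity (Theorem~\ref{uppersemicontinuos}) will give $\mathrm{rk}_+(N)\ge k$, and the Jacobian hypothesis will give $\mathrm{rk}_+(N)\le k$ provided moreover $\mathrm{rk}(N)\le k$. First I would observe that, since $p$ has positive coordinates, every entry $\sum_{h=1}^{k}x_{h,i}y_{h,j}$ of $P=f_+(p)$ is strictly positive, so there is $\epsilon_0>0$ such that every matrix in $B(P,\epsilon_0)$ has positive entries, in particular no zero columns. Theorem~\ref{uppersemicontinuos} then applies inside $B(P,\epsilon_0)$ and yields some $\epsilon_1\in(0,\epsilon_0]$ with $\mathrm{rk}_+(N)\ge k$ for all $N\in B(P,\epsilon_1)$.

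The core step is to produce $\epsilon_2>0$ such that $B(P,\epsilon_2)\cap X_{n\times m,k}\subseteq X^+_{n\times m,k}$, that is, every rank-$\le k$ matrix close to $P$ has non-negative rank $\le k$. Since $p$ is an interior point of the non-negative orthant, choose an open ball $U$ around $p$ contained in $\mathbb{R}_{>0}^{k(n+m)}$; on $U$ the maps $f_+$ and $f$ coincide, so $f^*_+(p)=f^*(p)$, which by hypothesis has maximal rank $k(n+m-k)=\dim X_{n\times m,k}$. This maximality cannot be attained at a singular point of $X_{n\times m,k}$, so $P$ is a smooth point of $X_{n\times m,k}$; thus near $P$ the variety $X_{n\times m,k}$ is a $k(n+m-k)$-dimensional manifold, and by the criterion recalled in the Remark above (see \cite[page~25 Corollary (d)]{warner:71}) the map $f_+$ is locally surjective at $p$, so $f_+(U)$ contains a neighborhood of $P$ inside $X_{n\times m,k}$. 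Hence there is $\epsilon_2>0$ with $B(P,\epsilon_2)\cap X_{n\times m,k}\subseteq f_+(U)\subseteq X^+_{n\times m,k}$, and every $N\in B(P,\epsilon_2)$ with $\mathrm{rk}(N)\le k$ satisfies $\mathrm{rk}_+(N)\le k$.

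Putting $\epsilon=\min\{\epsilon_1,\epsilon_2\}$ then finishes the argument: if $N\in B(P,\epsilon)$ and $\mathrm{rk}(N)=\mathrm{rk}(P)$, then $\mathrm{rk}(N)=\mathrm{rk}(P)\le\mathrm{rk}_+(P)=k$, so $\mathrm{rk}_+(N)\le k$ by the core step, while $\mathrm{rk}_+(N)\ge k$ by the first step; hence $\mathrm{rk}_+(N)=k=\mathrm{rk}_+(P)$, as required.

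I expect the main obstacle to be the core step, specifically passing from the formal statement ``$f^*_+(p)$ has maximal rank'' to genuine local surjectivity. One must be careful to compare $f_+$ with the correct target, namely the determinantal variety $X_{n\times m,k}$ of rank-$\le k$ matrices (of dimension only $k(n+m-k)$), rather than the ambient space $\mathbb{R}^{mn}$, and to verify that $P$ is a smooth point of it, so that $X_{n\times m,k}$ is a manifold of the expected dimension near $P$ and the local-surjectivity criterion indeed yields a full neighborhood of $P$ in $X_{n\times m,k}$ inside the image of $f_+$. This smoothness is precisely what maximality of $f^*_+(p)$ delivers; the positivity of the coordinates of $p$ is used both to keep this analysis in the interior of the orthant, where $f_+=f$, and to force $P$ (hence a neighborhood of $P$) to have positive entries, which makes Theorem~\ref{uppersemicontinuos} applicable there.
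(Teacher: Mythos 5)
Your proof is correct and follows the paper's argument step for step: both use the Jacobian hypothesis and Warner's criterion to conclude that $f_+$ maps a neighborhood of $p$ (kept inside the positive orthant) onto a relative neighborhood of $P$ in $X_{n\times m,k}$, thereby bounding $\mathrm{rk}_+(N)\le k$ for nearby rank-$\le k$ matrices, and then invoke Theorem~\ref{uppersemicontinuos} for the reverse inequality. You merely make explicit two points the paper leaves tacit — that positivity of $p$ ensures $P$ has no zero columns so the upper-semicontinuity theorem applies, and that the local-surjectivity conclusion rests on $P$ being a smooth point of $X_{n\times m,k}$ (a claim you, like the paper, assert rather than prove).
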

\begin{proof}
By the hypothesis we get that $f_+$ is locally surjective at $p$.
Hence, there exist balls $B(P,\epsilon)$ and $B(p,\delta)$ such
that each $N\in B(P,\epsilon)\cap X_{n \times m,k}$ has a preimage
in $B(p,\delta)$ using the map $f_+$. Moreover, if $p$ has
positive coordinates we can find, possibly smaller, $\epsilon$ and
$\delta$ such that $B(p,\delta)$ is in the positive orthant. Thus,
given $N\in B(P,\epsilon)\cap X_{n \times m,k}$ there exists $q$
with positive coordinates such that $f(q)=N$. Hence
$\mathrm{rk}_+(N)\leq k=\mathrm{rk}_+(P)$. The conclusion follows
by Theorem \ref{uppersemicontinuos} by taking an $\epsilon$ small
enough.
\end{proof}

{\em Remark.}  The proof above also shows that there exists a
neighborhood $U$ of $P$ such that each matrix in $U$ of rank at
most $k$ has non-negative rank at most $k$. In other words, if
$P=f_+(p)$, $f^*_+(p)$ has maximal rank and $p$ has non-negative
coordinates, then $X^+_{n\times m,k}\cap U= X_{n \times m,k} \cap
U$ for $U$ a suitable neighborhood of $P$.

Now we describe sufficient conditions on $p$ granting that
$f^*_+(p)$ has maximal rank.

\begin{theorem}\label{jacobianthm} If $p\in \mathbb{R}^{k(n+m)}$ is a point with coordinates
\[p=\left(x_{1,1}, \dots ,x_{1,n}, y_{1,1}, \dots ,y_{1,m}, \dots,
x_{k,1}, \dots ,x_{k,n}, y_{k,1}, \dots ,y_{k,m} \right) ,\] such
that
\begin{itemize}
\item $(x_{h,1}, \dots, x_{h,n})$, $h=1, \dots, k$ are linearly
independent vectors of $\mathbb{R}^n$;

\item $(y_{h,1}, \dots, y_{h,m})$, $h=1, \dots, k$ are linearly
independent vectors of $\mathbb{R}^m$;

\end{itemize}

then $\mathrm{rk}(f^*_+(p))= k(n+m-k)$.
\end{theorem}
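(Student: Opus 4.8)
The plan is to compute the Jacobian $f^*_+(p)$ explicitly, exhibit it as a matrix whose column space (or row space) can be analyzed via Kronecker-product structure, and then show the two linear-independence hypotheses force the rank to be exactly $k(n+m-k)$. Write $x_h = (x_{h,1},\dots,x_{h,n})^t$ and $y_h = (y_{h,1},\dots,y_{h,m})^t$, so that $f(p) = \sum_{h=1}^k x_h y_h^t$. The partial derivative of $f$ with respect to $x_{h,i}$ is the matrix $e_i y_h^t$ (where $e_i$ is the $i$-th standard basis vector of $\mathbb{R}^n$), and the partial derivative with respect to $y_{h,j}$ is $x_h e_j^t$. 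Thus, reading $f(p)$ as a vector in $\mathbb{R}^{nm}$, the image of the differential $df_p$ is the linear span
\[
\mathrm{Im}(df_p) = \mathrm{span}\Bigl\{\, e_i y_h^t,\ x_h e_j^t \ :\ 1\le h\le k,\ 1\le i\le n,\ 1\le j\le m \,\Bigr\}.
\]
Equivalently, letting $C = \mathrm{span}(x_1,\dots,x_k)\subseteq\mathbb{R}^n$ and $R = \mathrm{span}(y_1,\dots,y_k)\subseteq\mathbb{R}^m$, the span of the $e_i y_h^t$ is $\mathbb{R}^n \otimes R$ (all matrices whose rows lie in $R$), and the span of the $x_h e_j^t$ is $C \otimes \mathbb{R}^m$ (all matrices whose columns lie in $C$). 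So $\mathrm{Im}(df_p) = (\mathbb{R}^n\otimes R) + (C\otimes\mathbb{R}^m)$ inside $\mathbb{R}^n\otimes\mathbb{R}^m$. Since $f^*_+$ is just the restriction of $f^*$ to a point in the open positive orthant, the Jacobian of $f_+$ and of $f$ agree at such $p$, so it suffices to compute $\dim\bigl(\mathrm{Im}(df_p)\bigr)$.

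Now I would compute this dimension by inclusion–exclusion on the two subspaces. By the linear-independence hypotheses, $\dim C = k$ and $\dim R = k$, hence $\dim(\mathbb{R}^n\otimes R) = nk$ and $\dim(C\otimes\mathbb{R}^m) = mk$. The key point is to identify the intersection: a standard fact about tensor products is that
\[
(\mathbb{R}^n\otimes R)\cap(C\otimes\mathbb{R}^m) = C\otimes R,
\]
i.e. a matrix whose rows lie in $R$ and whose columns lie in $C$ must in fact lie in the subspace spanned by the $x_h y_{h'}^t$. (One way to see this: pick bases extending those of $C$ and $R$; in the corresponding coordinates the condition "rows in $R$" kills all but the first $k$ columns of the matrix, "columns in $C$" kills all but the first $k$ rows, and the surviving $k\times k$ block is exactly $C\otimes R$.) Granting this, $\dim(C\otimes R) = k^2$, and therefore
\[
\dim\bigl(\mathrm{Im}(df_p)\bigr) = nk + mk - k^2 = k(n+m-k),
\]
which is exactly the asserted maximal rank $\dim(X_{n\times m,k})$. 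Since the image of the differential is always contained in the tangent space to $X_{n\times m,k}$, which has this same dimension, equality holds and $\mathrm{rk}(f^*_+(p)) = k(n+m-k)$.

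The main obstacle is the intersection computation $(\mathbb{R}^n\otimes R)\cap(C\otimes\mathbb{R}^m) = C\otimes R$; the inclusion $\supseteq$ is trivial, but the reverse inclusion is where the linear-independence hypotheses actually get used, and it needs to be argued cleanly rather than waved at. A concrete alternative to the tensor-product language, which may be what the authors prefer for a more elementary and self-contained exposition, is to write out $f^*_+(p)$ as an explicit $nm \times k(n+m)$ matrix in block form and exhibit $k(n+m-k)$ linearly independent rows directly: for instance, complete $(x_1,\dots,x_k)$ to a basis of $\mathbb{R}^n$ and $(y_1,\dots,y_k)$ to a basis of $\mathbb{R}^m$, perform the change of coordinates on the target, and observe that in the new coordinates the Jacobian becomes block-triangular with the ranks of the diagonal blocks visibly summing to $k(n+m-k)$. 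Either route reduces the problem to elementary linear algebra once the structure of the partial derivatives is recorded; I would present whichever is shorter, probably the tensor/dimension-count argument with a one-line justification of the intersection identity.
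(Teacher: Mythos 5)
Your argument is correct, and it takes a genuinely different route from the paper's. The paper works with the partial derivatives directly and constructs, in a greedy fashion, an explicit set of $k(n+m-k)$ linearly independent derivatives: it first observes that the $kn$ derivatives $f_{x_{h,i}}$ are independent (spanning $\mathbb{R}^n\otimes R$ in your notation), and then adjoins blocks of $k$ derivatives $f_{y_{h,j}}$ at a time, choosing at each stage a coordinate index $j$ for which $e_j$ avoids the relevant span so that the dimension grows by exactly $k$, iterating $m-k$ times. You instead compute $\dim\bigl(\mathrm{Im}(df_p)\bigr)$ once and for all by recognizing the image as $(\mathbb{R}^n\otimes R)+(C\otimes\mathbb{R}^m)$ and applying inclusion--exclusion, with the intersection identity $(\mathbb{R}^n\otimes R)\cap(C\otimes\mathbb{R}^m)=C\otimes R$ doing the work; your one-line justification of that identity (change to bases extending those of $C$ and $R$, then the two membership conditions confine the matrix to the top-left $k\times k$ block) is complete, and this is exactly where the two linear-independence hypotheses enter. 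Your route is more conceptual and avoids the slightly delicate ``we may assume $e_j\notin V$'' relabelings the paper has to manage at each step of its induction; the paper's route is more elementary in that it never leaves the language of explicit matrices and never needs the tensor-product intersection lemma. One small remark: you restrict to $p$ in the open positive orthant to equate $f^*_+$ with $f^*$, but the theorem's hypotheses only assume linear independence of the $x_h$ and of the $y_h$; since $f$ is polynomial its Jacobian is defined and agrees with $f^*_+$ at every point where the latter makes sense, so this restriction is unnecessary (and the paper does not impose it), though it does no harm.
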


\begin{proof}
We may assume that $n\leq m$. Since the Jacobian is given by all
possible derivatives with respect to $x_{h,i}$ and $y_{h,j}$, it
is enough to show that exactly $k(m+n-k)$ of them are linearly
independent. First of all we notice that the derivative with
respect to $x_{h,i}$ is a matrix of the form
\[f_{x_{h,i}}=
\begin{pmatrix}
0\\

\vdots \\
0\\
1\\
0\\
\vdots \\
0\\
\end{pmatrix}
\begin{pmatrix}
y_{h,1} & \dots & y_{h,m}
\end{pmatrix},
\]
where the one is in position $i$. Similarly, the derivative with
respect to $y_{h,j}$ is a matrix of the form
\[f_{y_{h,j}}=
\begin{pmatrix}
x_{h,1}\\
\vdots \\
x_{h,n}\\
\end{pmatrix}
\begin{pmatrix}
0 & \dots & 0 & 1 & 0 & \dots & 0
\end{pmatrix},
\]
where the one is in position $j$. That is, the derivative with
respect to $x_{h,i}$, $i=1, \dots n$ is a matrix with all zeros
but the $i$-th row consisting of the vector $(y_{h,1}, \dots,
y_{h,m})$. Similarly the derivative with respect to $y_{h,j}$,
$j=1, \dots m$ is a matrix with all zeros but the $j$-th column
consisting of the vector $(x_{h,1}, \dots, x_{h,n})$.

We now build a set consisting of $k(m+n-k)$ linearly independent
derivatives and hence we prove the statement.

The derivatives $f_{x_{h,i}}, i=1,\ldots,n$ are clearly linearly
independent and we let $\mathcal S$ be the $kn$-dimensional vector
space that they span. Thus, if $m=k$ we are done.

If $m<k$ we proceed as follows. Let $V=\langle
(y_{h,1},\ldots,y_{h,m}),h=1,\ldots,k \rangle$ where $\dim V=k$ by
hypothesis. Now consider all the vectors in $\mathbb{R}^m$ with at
most one non-zero component and notice that they span a vector
space of dimension $m>k$. Hence, $V$ can not contain all these
vectors and we may assume that $(1,0,\ldots,0)\not\in V$. Thus it
is easy to see that the linear span
\[\mathcal{S}_1=\langle\mathcal{S},f_{y_{h,1}}\mbox{ such that }1\leq h\leq k\rangle\]
is such that $\dim \mathcal{S}_1=\dim \mathcal{S}+k=kn+k$.

If $m-k=1$ we are done. If $m-k>1$ we argue as above. Namely, as
$\dim V=k$, $V$ can not contain all vectors with first component
one and at most one more non-vanishing component. In particular,
we may assume that $(1,*,0,\ldots,0)\not\in V$, where $*$ is any
non-zero real number. Then we consider
\[\mathcal{S}_2=\langle\mathcal{S}_1,f_{y_{h,2}}\mbox{ such that }1\leq h\leq k\rangle\]
and we readily see that $\dim \mathcal{S}_2=\dim
\mathcal{S}_1+k=kn+2k$.

For each $j\leq m-k$  we can repeat the process above increasing
the dimension of $\mathcal{S}_j$ by $k$ each time.  Hence, we can
construct $\mathcal{S}_{m-k}$ such that it is spanned by
derivatives and $\dim \mathcal{S}_{m-k}=k(n+m-k)$. The statement is now
proved.
\end{proof}

\section{Examples}\label{examples}

In this section we will present some interesting examples. Some of
these examples were inspired to us by the matrix
\[
\begin{pmatrix}
1 & 0 & 1 & 0\\
1 & 0 & 0 & 1\\
0 & 1 & 1 & 0\\
0 & 1 & 0 & 1
\end{pmatrix}
\]
which is the most well-known example of a matrix with rank three
and non-negative rank four (see \cite{cohen|rothblum:93}).

\noindent{\bf Small cases.} Let $P$ be an $n\times m$ matrix and
assume $n\leq m$. We want to describe how the non-negative rank of
$P$ changes under perturbations for small values of $n$. If $n\leq
3$, then it is easy to show that
$\mathrm{rk}(P)=\mathrm{rk}_+(P)$, see \cite{cohen|rothblum:93}.
Thus the first interesting cases are for $n=4$. If
$\mathrm{rk}_+(P)=4$ then, by Theorem \ref{uppersemicontinuos},
any small perturbation will not change the non-negative rank.
Thus, let us assume that $\mathrm{rk}_+(P)=3$. Using Proposition
\ref{barycenter}, we know that there are small perturbations
preserving the non-negative rank. Of course, there are small
perturbations not preserving it: it is enough to increase the
ordinary rank. Hence, we ask: are there small perturbations of
$P$, say $P_\epsilon$, such that
$\mathrm{rk}(P_\epsilon)=\mathrm{rk}(P)$ and
$\mathrm{rk}_+(P_\epsilon)=4$? Not surprisingly, the answer
depends on the choice of $P$. It is easy to construct a matrix $P$
with the required ranks and satisfying the hypothesis of
Proposition \ref{isorankprop}. Thus, in this case, the answer to
our question is no. But, for a different choice of $P$, the answer
can be yes. Consider, for example, $P_\epsilon$ defined as
follows:
\[
P_\epsilon=\frac{1}{4}\begin{pmatrix}
2 & 0 & 2 & 1-\epsilon\\
0 & 2 & 0 & 1+\epsilon\\
0 & 0 & 2 & 1+\epsilon\\
2 & 2 & 0 & 1- \epsilon
\end{pmatrix},
\]
and let $P=P_0$. It is easy to see that
$\mathrm{rk}(P_\epsilon)=3$ for all $\epsilon$ while
$\mathrm{rk}_+(P_0)=3$ and $\mathrm{rk}_+(P_\epsilon)=4$ for small
positive values of $\epsilon$. To see this we use the graphical
presentation in Figure \ref{bcr1} where we denote with
$c_1,\ldots,c_4$ the points corresponding to the columns of $P$
while $c_4(\epsilon)$
 corresponds to the fourth column of $P_\epsilon$. In Figure \ref{bcr1}, and in
the following figures, we use the graphic representation described
in \cite{lin|chu:10} and related to the map $\pi_3$. More
precisely, a $4\times 4$ matrix will be presented as a set of four
points in a tetrahedron. This presentation allows for an easy
visualization of rank related properties. We notice, for example,
that a rank three matrix will correspond to four coplanar points.

\begin{figure}
\begin{center}
\epsfig{file=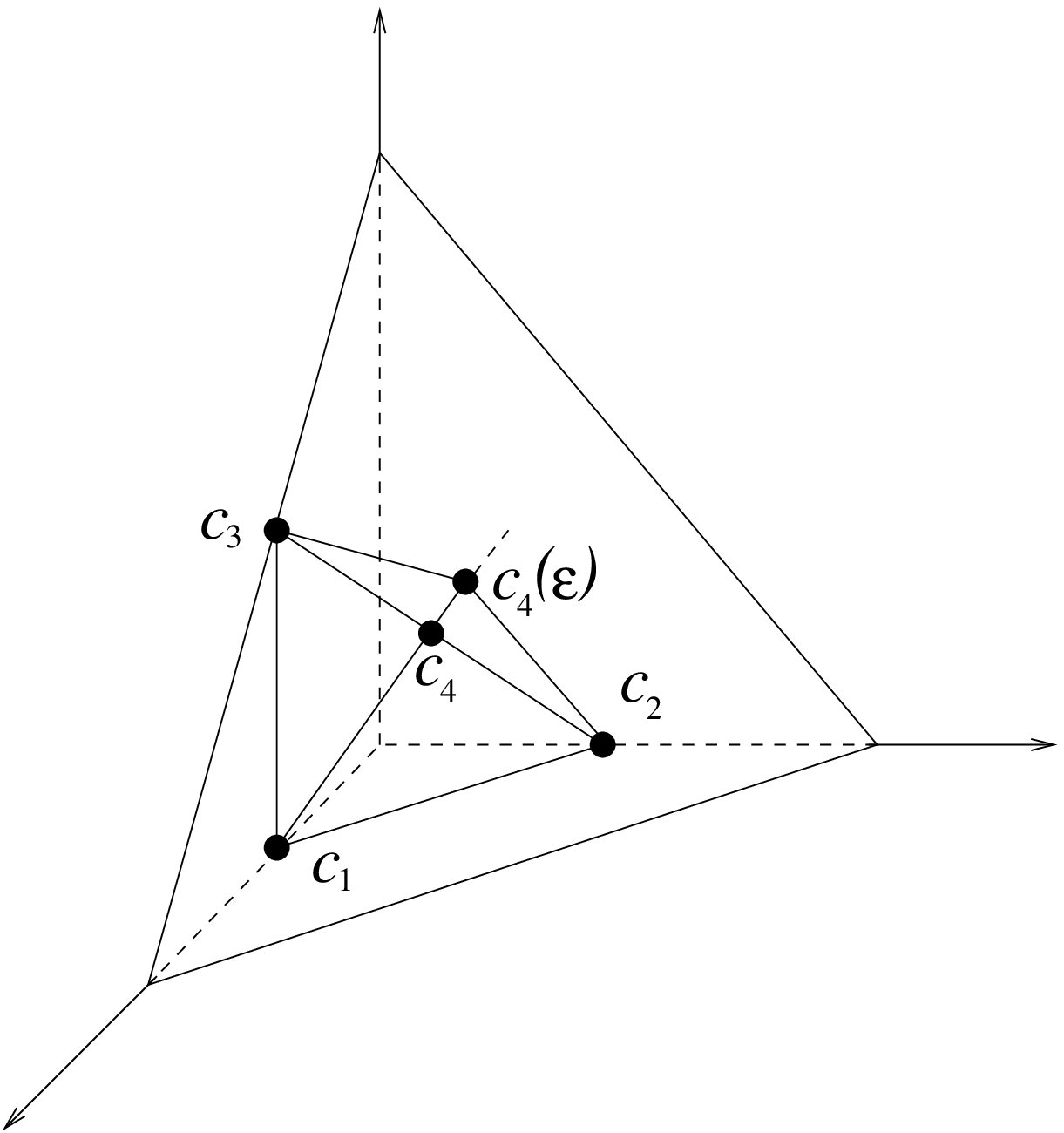, width=6cm} \epsfig{file=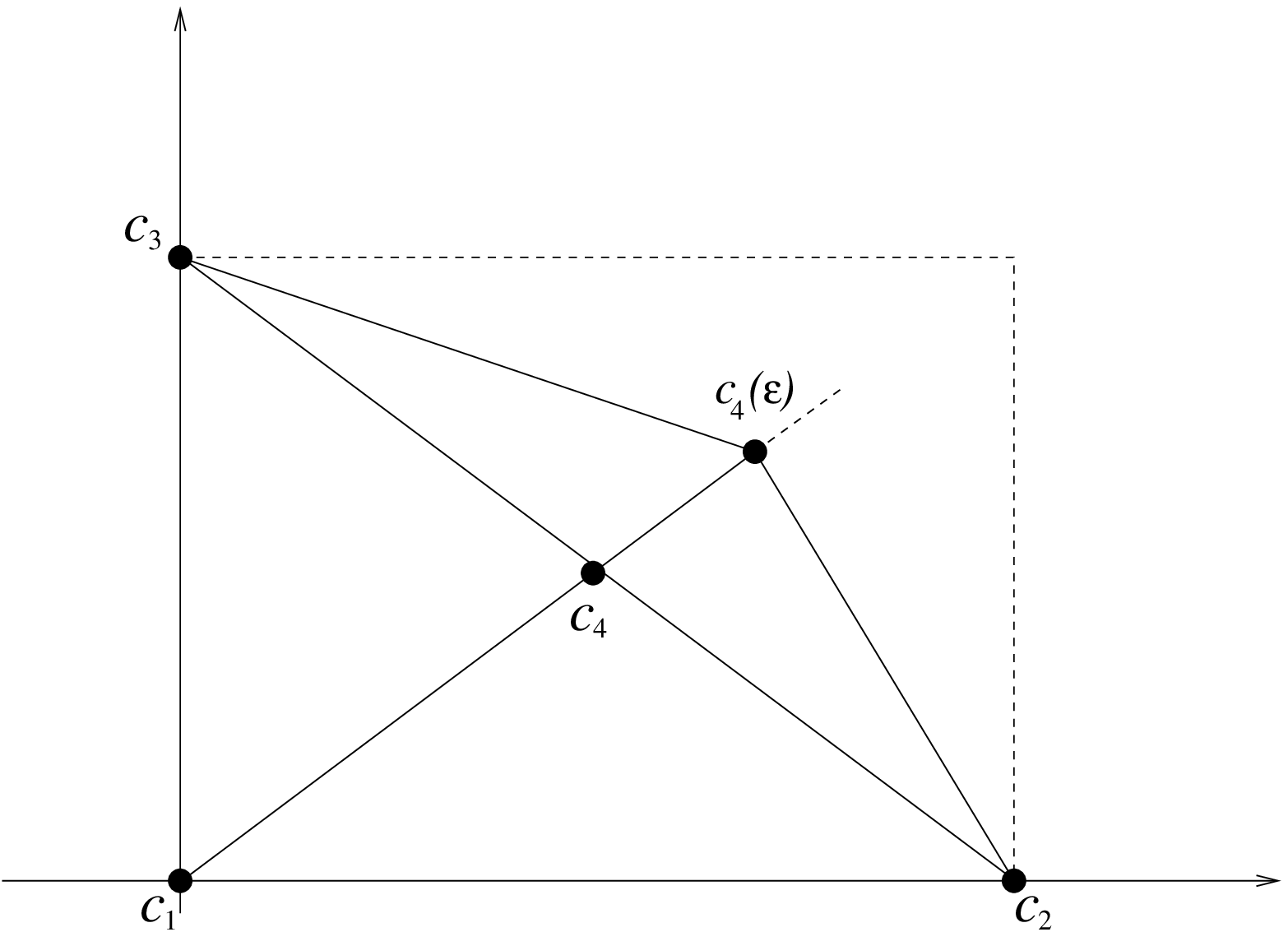, width=6cm}
\end{center}

\caption{The matrices $P_\epsilon$ for $\epsilon=0$ and a small
positive value of $\epsilon$ represented in the tetrahedron and in
the plane.}\label{bcr1}
\end{figure}

\noindent{\bf Failing of upper-semicontinuity.} The
upper-semicontinuity of the non-negative rank is of course a local
property as shown by the following example. Consider the matrix
\[
M_\epsilon=\frac{1}{2(1+2\epsilon)}\begin{pmatrix}
1+ \epsilon & \epsilon & 1+ \epsilon & \epsilon\\
1+ \epsilon & \epsilon & \epsilon & 1+ \epsilon\\
\epsilon & 1+ \epsilon & 1+ \epsilon & \epsilon\\
\epsilon & 1+ \epsilon & \epsilon & 1+ \epsilon
\end{pmatrix}
\]
and let $c_1(\epsilon),\ldots,c_4(\epsilon)$ be the four column
vectors where we set $c_j=c_j(0),i=1,\ldots,4$. When $\epsilon=0$
the matrix has non-negative rank equal to four. We use the map
$\pi_3$ to represent the columns in the simplex $\Delta^3$, which
is a tetrahedron in $\mathbb{R}^3$. To simplify the drawings, we
have dropped the first coordinate of each column instead of the
last one, but of course this does not affect our analysis. The
four points for the matrix $M_0$ are plotted in Figure
\ref{tetratrian} (left).

\begin{figure}
\begin{center}
\epsfig{file=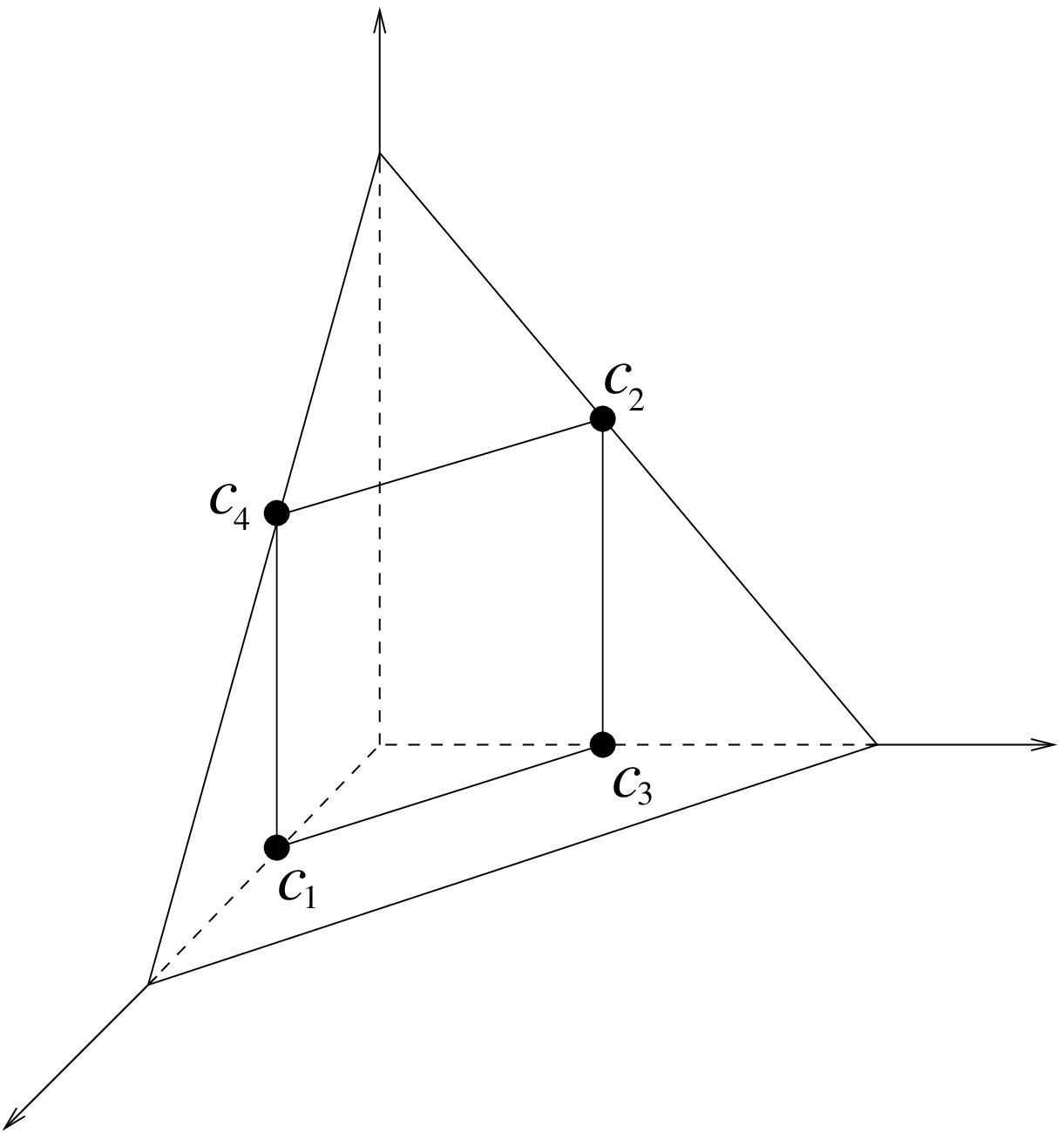, width=6cm} \epsfig{file=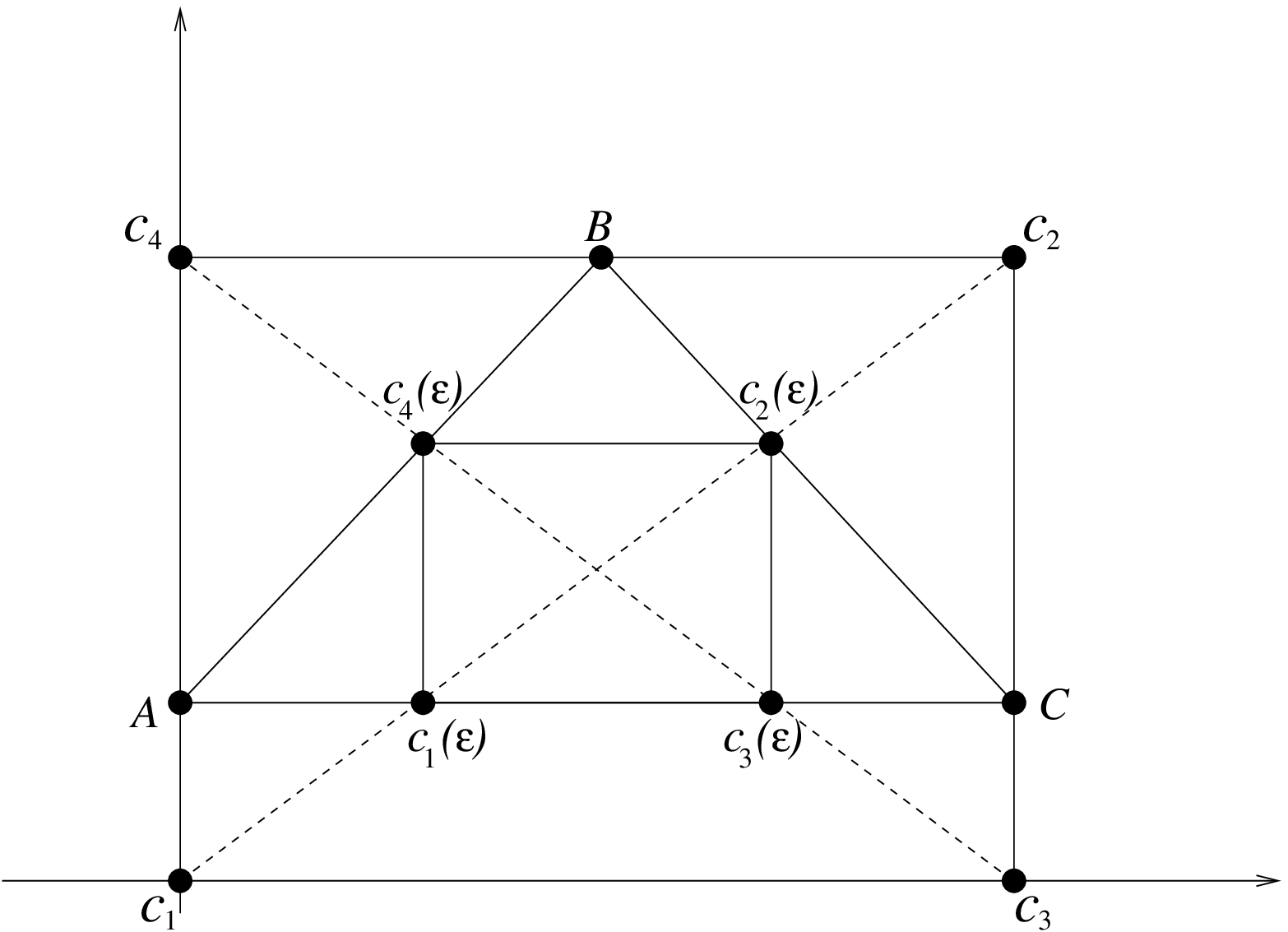, width=6cm}
\end{center}
\caption{The matrix $M_0$ in the simplex $\Delta^3$ (left) and the
points $c_1(\epsilon), \ldots, c_4(\epsilon)$ in the critical
configuration for $\epsilon = \sqrt{2}/2$
(right).}\label{tetratrian}
\end{figure}

The points $c_1(\epsilon),\ldots,c_4(\epsilon)$ are the vertices
of a rectangle $R_\epsilon$ which we can draw in the plane. As
$\epsilon >0$ increases, the four points move along the main
diagonals, as in Figure \ref{tetratrian} (right), and $R_\epsilon$ will
eventually be contained in the triangle $ABC$ where
\[
A=(0, {\sqrt{2}}/{2}-{1}/{2}) \qquad B= ({\sqrt{2}}/{4},{1}/{2})
\qquad C=({\sqrt{2}}/{2},{\sqrt{2}}/{2}-{1}/{2}).
\]

It is not hard to show that, for $\epsilon < {\sqrt{2}}/{2}$, we
have $\mathrm{rk}_+(M_\epsilon)=4$ while
$\mathrm{rk}_+(M_{{\sqrt{2}}/{2}})=3$. Hence, moving far enough
from $M_0$ the non-negative rank can decrease.

\noindent {\bf Non-convexity of $X^+_{4 \times 4,3}$.} In the $4
\times 4$ case, the properties of the non-negative rank imply that
the unique non-trivial case is the case of rank $3$. The matrices
in $X_{4 \times 4,3}$ can belong to $X^+_{4 \times 4,3}$ or to
$X^+_{4 \times 4,4}\setminus X^+_{4 \times 4,3}$. With the same
graphical approach as above, we can show that the set $X^+_{4
\times 4,3}$ is not convex (even if the ordinary rank is
constant). To do this, it is enough to consider the two matrices
$A_1=[c_4, c_2, c_3, f_1]$ and $A_2=[c_4, c_3, c_1, f_2]$ where
the columns $c_1, c_2, c_3, c_4, f_1, f_2$ are displayed in Figure
\ref{convex} in the same plane as in Figure \ref{tetratrian}
(right).
\begin{figure}
\begin{center}
\epsfig{file=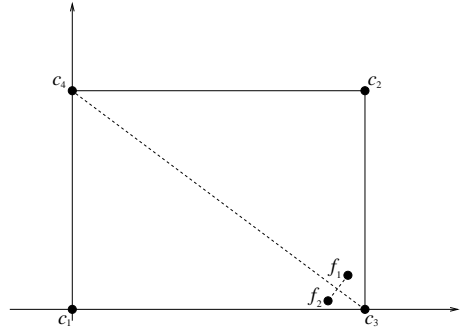, width=6cm}
\end{center}
\caption{The points $c_1, \ldots, c_4,f_1,f_2$ defining the
matrices $A_1$ and $A_2$.}\label{convex}
\end{figure}
It is immediate to see that both $A_1$ and $A_2$ have rank $3$ and
non-negative rank $3$, but the matrix $A=(A_1+A_2)/2$ has rank $3$
(its $4$ points are coplanar) but non-negative rank $4$. With the
same technique, one can also see that the set $X_{4 \times 4,3}
\setminus X^+_{4 \times 4,3}$ is not convex.
\[
B_1 = \begin{pmatrix}
1 & 0 & 1 & 0\\
1 & 0 & 0 & 1\\
0 & 1 & 1 & 0\\
0 & 1 & 0 & 1
\end{pmatrix} \qquad
B_2= \begin{pmatrix}
1 & 0 & 1 & 0\\
1 & 1 & 0 & 0\\
0 & 0 & 1 & 1\\
0 & 1 & 0 & 1
\end{pmatrix} \, .
\]
$B_1$ and $B_2$ have rank $3$ and non-negative rank $4$, as they
are obtained from $M_0$ possibly with permutation of columns, but
the matrix $B=(B_1+B_2)/2$ has non-negative rank $3$.

\section{Relations with the analysis of statistical mixture models}\label{statsection}

The results about the non-negative rank presented above have a
useful counterpart in Probability and Mathematical Statistics. In
particular the notion of nonnegative rank is useful in the study
of mixture of independence models for discrete distributions. We
now recall some basic definitions.

\noindent {\bf Distribution.} The distribution (or density) of a
random variable $X$ on a set of $n$ possible outcomes $\{1,
\ldots, n\}$ is a vector of $n$ non-negative numbers $(p_1,
\ldots, p_n)$ such that
\begin{equation*}
p_i \ge 0 \ \mbox{ for all } \ i \ \mbox{ and } \ \sum_i p_i = 1
\, ,
\end{equation*}
where $p_i= \mathbb{P}(X=i)$ is the probability that $X$ assumes
the value $i$.

\noindent {\bf Joint distribution.} If we consider a pair $(X,Y)$
of random variables on $\{1, \ldots, n\}$ and $\{1, \ldots, m \}$
respectively, the joint distribution of $X$ and $Y$ is a
probability matrix, i.e. a non-negative matrix $P=(p_{i,j})$ such
that
\begin{equation} \label{twowaytable}
p_{i,j} \ge 0 \ \mbox{ for all } \ i,j \ \mbox{ and } \ \sum_{i,j}
p_{i,j} = 1 \, ,
\end{equation}
where $p_{i,j}= \mathbb{P}(X=i, Y=j)$ is the probability that
$(X=i)$ and $(Y=j)$.

\noindent {\bf Probability models.} A matrix $P$ satisfying the constraints in Equation
\eqref{twowaytable} is also called a two-way table. The set
\begin{equation*}
\Delta = \left\{ P \in \mathbb{R}^{nm} \ : \ p_{i,j} \ge 0 \
\mbox{ for all } \ i,j \ \mbox{ and } \ \sum_{i,j} p_{i,j} = 1
\right\} \,
\end{equation*}
is the $n \times m$ (closed) standard simplex and each probability
distribution for a pair $(X,Y)$ belongs to $\Delta$. A probability
model ${\mathcal M}$ is a subset of $\Delta$. In many cases
${\mathcal M}$ is defined through a set of polynomial equations,
and in such case we call ${\mathcal M}$ an algebraic model.

\noindent {\bf The independence model.} For two-way tables, one
among the most simple models is the independence model. The
construction of the independence model is described for instance
in \cite{agresti:02}. Under independence of $X$ and $Y$ we have
\begin{equation*}
\mathbb{P}(X=i,Y=j) =  \mathbb{P}(X=i) \mathbb{P}(Y=j)
\end{equation*}
for all $i=1, \ldots, n$ and for all $j = 1, \ldots , m$, and
therefore $P$ is a rank one matrix, i.e., there exist vectors $r$
and $c$ such that $P = c (r)^t$. Thus, the independence model for
$n \times m$ tables is the set:
\begin{equation*}
{\mathcal M}_I = \left\{ P \ : \ \mathrm{rank}( P) = 1 \right\}
\cap \Delta \, .
\end{equation*}

{\em Remark.} It is a well known fact in Linear Algebra that a non-zero matrix
$P$ has rank $1$ if and only if all $2 \times 2$ minors of $P$
vanish. This shows that the independence model is an algebraic
model. Thus, an equivalent definition of the independence model is
as follows. The independence model is the set:
\begin{equation*}
{\mathcal M}_I = \left\{ P \ : p_{i,j}p_{k,h}-p_{i,h}p_{k,j} = 0  \mbox{ for all } \ 1 \le i < k \le n, 1 \le j < h \le m
\right\} \cap \Delta \, .
\end{equation*}
Notice that the model is defined through pure binomials and that
the set of all the $2 \times 2$ minors of a matrix are a system of
generator of a toric ideal. This is a general fact in the analysis
of algebraic statistical models and the models of this form are
called toric models. The reader can refer to
\cite{drton|sturmfels|sullivant:09} and \cite{rapallo:07} for
further details.

\noindent{\bf Mixture models.} The mixture of two independence
models is defined through the following procedure:
\begin{itemize}
\item Take two distributions $P_1, P_2 \in {\mathcal M}_I$;

\item Toss a (biased) coin and choose $P_1$ with probability
$\alpha$ and $P_2$ with probability $(1-\alpha)$.
\end{itemize}
It is clear that the resulting distribution is a convex
combination of $P_1$ and $P_2$, i.e., a matrix of the form $\alpha
P_1 + (1-\alpha) P_2$. This process can be generalized. We can
consider $k$ distributions $P_1, \ldots, P_k \in {\mathcal M}_I$
and define the mixture of $k$ independence models as follows. The
mixture of $k$ independence models is the set
\begin{equation} \label{kmixture}
{\mathcal M}_{kI} = \{ P \ : \ P = \alpha_1c_1 (r_1)^t + \ldots +
\alpha_k c_k (r_k)^t \} \, ,
\end{equation}
where the vectors $r_h$, the vectors $c_h$ and $\alpha=(\alpha_1,
\ldots, \alpha_k)$ are probability distributions, i.e. all the
components are non-negative and each vector has sum one. Some
results and examples about this type of statistical models are
presented in \cite{fienberg|hersh|rinaldo|zhou:10}.

Notice that in the decomposition in Equation \eqref{kmixture}, the
components must be non-negative, and therefore the model coincides
with the set of $n \times m$ matrices with non-negative rank at
most $k$ and with sum equal to one, i.e., ${\mathcal
M}_{kI}=X^+_{n \times m,k}$.

{\bf Maximum Likelihood Estimation.} The problem of MLE consists
in finding the global maxima of a suitable function, called
likelihood, $L: {\mathcal M}_{kI} \rightarrow {\mathbb R}$. This
problem is of great relevance and it has stimulated a lot of
research, both from the theoretical and the numerical point of
view. The interested reader can find a summary in
\cite{mclachlan|peel:00}. In the case of mixture models, MLE is
quite difficult mainly for two reasons: (a) while for a large
class of statistical models the likelihood is a concave function,
for mixture model this is not true; (b) the natural
parametrization of the model as in Eq. \eqref{kmixture} is
redundant, see \cite{carlini|rapallo:10c} for more on this. The
papers \cite{fienberg|hersh|rinaldo|zhou:10} and
\cite{govaert|nadif:10} present some {\it ad hoc} solutions for
mixture models with special interest in applications. Hence, a
precise investigation of the geometric structure of the
statistical model is essential to handle the maximization problem.

From the geometric investigation carried out in the previous
section, we get a negative result. It is not possible to
approximate a probability matrix $P$, with
$\mathrm{rk}_+(P)\neq\mathrm{rk}(P)$, using matrices with ordinary
rank and non-negative rank which coincide. This fact is summarized
in the following corollary.

\begin{corollary}\label{negativecorr}
Given $n,m$ and $2 < k < \min\{n,m\}$, the set ${\mathcal M}_{kI}$ is not
dense in $X_{n \times m,k}$.
\end{corollary}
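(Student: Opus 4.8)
The plan is to exhibit, for every admissible triple $(n,m,k)$, a single probability matrix $P$ of size $n\times m$ with $\mathrm{rk}(P)=k$ but $\mathrm{rk}_+(P)>k$, and then to let upper-semicontinuity do the rest: by Theorem \ref{uppersemicontinuos} some ball $B(P,\epsilon)$ consists entirely of matrices of non-negative rank $>k$, and hence meets ${\mathcal M}_{kI}$ nowhere, because every matrix in ${\mathcal M}_{kI}$ has non-negative rank at most $k$. Since such a $P$ lies in $X_{n\times m,k}$ (indeed in $X_{n\times m,k}\cap\Delta$), this is a point of $X_{n\times m,k}$ that is not in the closure of ${\mathcal M}_{kI}$, so ${\mathcal M}_{kI}$ is not dense.

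To build $P$, I would start from the $4\times 4$ matrix $A$ with $\mathrm{rk}(A)=3$ and $\mathrm{rk}_+(A)=4$ recalled at the opening of Section \ref{examples} (cf. \cite{cohen|rothblum:93}), whose four columns are all non-zero. Set
\[
Q=\begin{pmatrix} A & 0\\ 0 & I_{k-3}\end{pmatrix}\qquad(\text{with }Q=A\text{ when }k=3),
\]
a $(k+1)\times(k+1)$ non-negative matrix with $\mathrm{rk}(Q)=3+(k-3)=k$. Because $2<k<\min\{n,m\}$ we have $n,m\ge k+1$, so I can pad $Q$ up to an $n\times m$ matrix $\widetilde Q$ by appending $n-(k+1)$ rows, each equal to the first row, and $m-(k+1)$ columns, each equal to the first column of the resulting array; duplicating rows or columns keeps the rank equal to $k$ and creates no zero column, while $Q$ remains a submatrix of $\widetilde Q$. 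Finally put $P=\widetilde Q/s$ with $s$ the sum of the entries of $\widetilde Q$: rescaling by a positive scalar changes neither the rank nor the non-negative rank, so $P$ is a probability matrix without zero columns with $\mathrm{rk}(P)=k$.

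The heart of the argument — and the step I expect to cost the most — is the bound $\mathrm{rk}_+(P)>k$. Via the submatrix $Q$ and the routine fact that the non-negative rank of a submatrix never exceeds that of the whole matrix, it suffices to prove $\mathrm{rk}_+(Q)\ge k+1$, i.e. additivity of the non-negative rank on the direct sum $A\oplus I_{k-3}$. This can be quoted from \cite{cohen|rothblum:93}, or proved directly: in any non-negative dyadic decomposition $Q=\sum_h c_h r_h^{t}$ a non-zero term $c_h r_h^{t}$ cannot straddle the two blocks, because a $c_h$ (resp. $r_h$) whose support meets both row (resp. column) groups would, by non-negativity, force a non-zero entry in one of the vanishing off-diagonal blocks; so each term is either of $A$-type or of $I_{k-3}$-type, and separating the two families yields non-negative decompositions of $A$ and of $I_{k-3}$, forcing at least $\mathrm{rk}_+(A)+\mathrm{rk}_+(I_{k-3})=4+(k-3)=k+1$ terms. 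Since $\mathrm{rk}_+(Q)\le k+1$ is clear, $\mathrm{rk}_+(Q)=k+1$ and hence $\mathrm{rk}_+(P)\ge k+1$.

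It then only remains to assemble the pieces: apply Theorem \ref{uppersemicontinuos} to $P$ to obtain $\epsilon>0$ with $\mathrm{rk}_+(N)\ge k+1$ for all $N\in B(P,\epsilon)$; conclude $B(P,\epsilon)\cap{\mathcal M}_{kI}=\emptyset$; and note $P\in X_{n\times m,k}$, so $X_{n\times m,k}\not\subseteq\overline{{\mathcal M}_{kI}}$. Apart from the direct-sum additivity, every remaining ingredient (submatrix monotonicity of $\mathrm{rk}_+$, invariance of both ranks under positive scaling, the bookkeeping that no padding step produces a zero column) is entirely routine.
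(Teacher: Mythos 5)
Your proof is correct and follows the same route as the paper: exhibit a probability matrix $P$ with $\mathrm{rk}(P)=k<\mathrm{rk}_+(P)$ and invoke Theorem \ref{uppersemicontinuos} to get a whole ball around $P$ disjoint from ${\mathcal M}_{kI}$. The difference is one of completeness: the paper's proof simply opens with ``Let $P$ be a non-negative matrix such that $\mathrm{rk}(P)=k$ and $\mathrm{rk}_+(P)>k$'' and does not actually produce such a $P$ for general admissible $(n,m,k)$, whereas you construct one explicitly by taking $A\oplus I_{k-3}$ (with $A$ the standard $4\times4$ rank-$3$, $\mathrm{rk}_+$-$4$ matrix), padding by duplicate rows and columns, and rescaling. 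The key auxiliary step you supply — additivity of $\mathrm{rk}_+$ under direct sums via the block-support argument ruling out straddling dyads — is sound, and so are the routine facts you rely on (submatrix monotonicity of $\mathrm{rk}_+$, invariance of both ranks under duplication of rows/columns and under positive scaling). In short, same strategy, but your write-up actually closes a gap the paper leaves to the reader.
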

\begin{proof}
This result is a straightforward application of Proposition
\ref{uppersemicontinuos}. Let $P$ be a non-negative matrix such
that $\mathrm{rk}(P)=k$ and $\mathrm{rk}_+(P)>k$. Then there is no
sequence of matrices $P_n$ whose limit is $P$ such that
$\mathrm{rk}(P_n)=\mathrm{rk}_+(P_n)=k$.
\end{proof}

We consider Corollary \ref{negativecorr} a negative result in the
following sense: to study MLE on mixture models one must consider
matrices with non-negative rank different form the ordinary rank.
In particular, it is necessary to investigate the (not clear and
not trivial) geometry of the set  $X^+_{n \times m,k}$ and of its
boundary. This study is necessary in order to be able to exploit
optimization techniques and to avoid redundant variables.

\section*{Acknowledgments}
The authors thank the two anonymous referees and the editor for their suggestions and comments which led to a better paper.

\end{document}